\newtheorem{thm}{Theorem}
\newtheorem{lemma}{Lemma}
\newtheorem{prop}{Proposition}
\global\long\def\norm#1{\lVert#1\rVert}%
\global\long\def\abs#1{\lvert#1\rvert}%
\global\long\def\vecc#1{\text{vec}(#1)}%
\begin{document}

\def\BibTeX{{\rm B\kern-.05em{\sc i\kern-.025em b}\kern-.08em
    T\kern-.1667em\lower.7ex\hbox{E}\kern-.125emX}}
\markboth{\journalname, VOL. XX, NO. XX, XXXX 2017}
{Author \MakeLowercase{\textit{et al.}}: Preparation of Papers for IEEE Control Systems Letters (August 2022)}

\title{Optimization-based Constrained  Funnel Synthesis for Systems with  Lipschitz Nonlinearities   via Numerical Optimal Control
}

\author{Taewan Kim, Purnanand Elango, Taylor P. Reynolds, \\ Beh\c{c}et A\c{c}\i kme\c{s}e,  \IEEEmembership{Fellow, IEEE}, and Mehran Mesbahi,  \IEEEmembership{Fellow, IEEE}
\thanks{Manuscript received 17 March 2023; revised 19 May 2023; accepted 13 June 2023. This work was supported by Air Force Office of Scientific Research grant FA9550-20-1-0053. (Corresponding author: Taewan Kim.)}
\thanks{Taewan Kim, Purnanand Elango, Beh\c{c}et A\c{c}\i kme\c{s}e, and Mehran Mesbahi are with the Department of Aeronautics and Astronautics, University of Washington, Seattle, WA 98195, USA (e-mail: twankim@uw.edu, pelango@uw.edu, behcet@uw.edu, mesbahi@uw.edu).}%
\thanks{Taylor P. Reynolds is with Amazon Prime Air, Seattle, WA 98117, USA (e-mail:tayreyno@amazon.com).}%
}
\maketitle
\thispagestyle{empty}

\begin{abstract}
This paper presents a funnel synthesis algorithm for computing controlled invariant sets and feedback control gains around a given nominal trajectory for dynamical  systems with locally Lipschitz nonlinearities and  bounded disturbances. The resulting funnel synthesis problem involves a differential linear matrix inequality (DLMI) whose solution satisfies a Lyapunov condition that implies invariance and attractivity properties. Due to these properties, the proposed method can balance  maximization of initial invariant funnel size, i.e., size of the {\em funnel entry}, and minimization of the size of the attractive funnel for attenuating the effect of disturbance. To solve the resulting funnel synthesis problem with the DLMI as constraints, we employ a numerical optimal control approach that uses a multiple shooting method to convert the problem into a finite dimensional semidefinite programming problem. This framework does not require piecewise linear system matrices and funnel parameters, which is typically assumed in recent related work. We illustrate the proposed funnel synthesis method with a numerical example.
\end{abstract}

\begin{IEEEkeywords}
Lyapunov methods, LMIs, Robust control
\end{IEEEkeywords}

\section{INTRODUCTION}
\IEEEPARstart{F}{unnel}, also referred to as tube, represents regions of finite-time controlled invariant state space for closed-loop systems equipped with an associated feedback control law around a given nominal trajectory \cite{rakovic2006simple}. Funnel synthesis refers to a procedure for computing both the controlled invariant  set and the corresponding feedback control law. Once we compute a library of funnels along different nominal trajectories, the resulting funnel can be used for different purposes such as real-time motion planning  \cite{majumdar2017funnel} and feasible trajectory generation \cite{reynolds2021funnel}. 

The studies in funnel synthesis  can be separated into two categories depending on whether they aim to maximize \cite{reynolds2021funnel,tobenkin2011invariant,fejlek2022computing} or minimize the size of the funnel \cite{majumdar2017funnel,kim2022joint}. The funnel computation inherently aims to maximize the size of the funnel to have a larger controlled invariant set in the state space. On the other hand, when it comes to systems under uncertainty or disturbances, the funnel size should be minimized to bound the effect of the uncertainty. For example, \cite{majumdar2017funnel} minimize the size of the funnel to prohibit collision with obstacles instead of imposing obstacle avoidance constraints directly. However, minimizing the size of the funnel is against the original purpose of having a large controlled invariant set in the state space. In this work, we provide a funnel synthesis algorithm that balances maximizing the size of the funnel and minimizing the effect of the bounded disturbance. To this end, we exploit invariance and attractivity conditions derived from Lyapunov theory \cite{corless1993control} by solving linear matrix inequalities (LMIs) \cite{kothare1996robust,accikmecse2011robust}  and imposing state and input constraints directly on the funnel.

When employing the Lyapunov condition, the resulting optimization problem has a differential inequality of the Lyapunov function in continuous-time for a finite-time interval. Since it is intractable to satisfy the inequality for all time in the given interval, many approaches focus on imposing the differential inequality at a finite number of node points \cite{majumdar2017funnel,tobenkin2011invariant,fejlek2022computing}. When a quadratic Lyapunov function with a time-varying positive definite (PD) matrix is employed, the resulting differential inequality ends up with a differential linear matrix inequality (DLMI). To solve the resulting DLMI, one can assume that first-order approximations (Jacobians matrices) of the nonlinear dynamics computed around the nominal trajectory are continuous piecewise linear in time. By applying the same piecewise linear parametrization to the PD matrix in the Lyapunov function, one can obtain a finite number of LMIs whose feasibility is a sufficient condition for the original DLMI \cite{reynolds2021funnel,malikov2020numerical}. The main downside of this approach is that the  assumption of piecewise linear system matrices may have large errors, and applying the same parametrization on the PD matrix can be conservative.


 In this paper, we provide a constrained funnel synthesis algorithm for locally Lipschitz nonlinear systems under bounded disturbance. To this end, we express the closed-loop system around the given nominal trajectory as a linear time-varying system having uncertain terms. Then, the DLMI is derived based on the Lyapunov condition that guarantees the invariance and the attractivity conditions. With the Lyapunov condition, the continuous-time funnel optimization problem maximizes the size of the funnel entry and minimizes the attractive funnel for attenuating the effect of disturbance. Furthermore, the proposed method can satisfy linear state and control constraints in a way that the resulting funnel around the given nominal trajectory remains inside the feasible sets of states and controls. 
 To convert the funnel synthesis problem into a finite dimensional semidefinite programming (SDP) problem, we employ a numerical optimal control approach with a multiple shooting method \cite{diehl2011numerical}. 

The contributions of this work are as follows: First, the proposed funnel synthesis approach for locally Lipschitz nonlinear systems provides a new optimization framework that can 1) balance maximizing the size of the funnel entry and minimizing the effect of the disturbance, and 2) guarantee the satisfaction of linear state and control constraints on the funnel. 
Second, we provide a new approach based on multiple shooting in numerical optimal control for solving the DLMI. 
This is in contrast to the prior approaches that assume the piecewise linear approximation on the PD matrix in the Lyapunov function.

The notations $\mathbb{R}$, $\mathbb{R}_{+}$ $\mathbb{R}_{++}$, and $\mathbb{R}^{n}$ are the field of real, nonnegative, positive numbers, and the $n$-dimensional Euclidean space, respectively. The set $\mathcal{N}_{q}^{r}$ is a finite set of consecutive nonnegative integers, i.e., $\{q,q+1,\ldots,r\}$. The symmetric matrix $Q=Q^{\top}(\succeq)\!\succ 0$ implies $Q$ is PD (PSD) matrix, and $(\mathbb{S}^n_{+})\, \mathbb{S}^n_{++}$ denotes the set of all PD (PSD) matrices whose size is $n\times n$. The symbols $\otimes$ is the Kronecker product. The notation {*} denotes the symmetric part of a matrix.
The squared root of a PSD matrix $A$ is defined as $A^\frac{1}{2}$ such that $A = A^\frac{1}{2} A^\frac{1}{2} $. We omit the time argument $t$ if it is clear from the context. The operation $\oplus$ is Minkowski sum. 




\section{Constrained Funnel Synthesis} \label{sec:funnel}
\subsection{Locally Lipschitz Nonlinear Systems}
Consider the following continuous-time dynamics:
\begin{align}
\dot{x}(t) & =f(t,x(t),u(t),w(t)),\quad t\in[t_{0},t_{f}], \label{eq:nonlinear_sys}
\end{align}
where  $x(t)\in\mathbb{R}^{n_x}$ is state and $u(t)\in\mathbb{R}^{n_u}$ is input. The vector-valued function $w(t)\in\mathbb{R}^{n_w}$ represents bounded disturbance such that $\norm{w(\cdot)}_\infty \leq 1$ where $\norm{w(\cdot)}_\infty \coloneqq \sup_{t\in[t_{0},t_{f}]}\norm{w(t)}$, and $t_{0}$ and $t_{f}$ are initial and final time, respectively. The function $f:\mathbb{R}_+ \times \mathbb{R}^{n_x} \times \mathbb{R}^{n_u} \times \mathbb{R}^{n_w} \rightarrow \mathbb{R}^{n_x}$ is assumed to be continuously differentiable. Suppose that a nominal trajectory $(\bar{x}(\cdot),\bar{u}(\cdot),\bar{w}(\cdot))$ is a solution of the system \eqref{eq:nonlinear_sys}.  Particularly, we choose a zero disturbance for the nominal trajectory, that is $\bar{w}(t)=0$ for all $t\in[t_{0},t_{f}]$. Then, we can convert \eqref{eq:nonlinear_sys} into the linear time-varying (LTV) system with the nonlinear remainder term via linearization around the nominal trajectory, resulting in the following Lur'e type system \cite{reynolds2021funnel,boyd1994linear}:
\begin{align}
\nonumber \dot{x}(t) & =A(t)x(t)+B(t)u(t)+F(t)w(t)+Ep(t),\\
p(t) & =\phi(t,q(t)), \quad q(t) =Cx(t)+Du(t)+Gw(t), \label{eq:Lure}
\end{align}
where $p(t)\in\mathbb{R}^{n_p}$ is a lumped nonlinearity represented by a nonlinear function $\phi(t)$ and its argument $q(t)\in\mathbb{R}^{n_q}$. The matrices $A(t)$, $B(t)$, and $F(t)$ are first-order approximations of the nonlinear dynamics \eqref{eq:nonlinear_sys} around the nominal trajectory.
The matrices $E\in\mathbb{R}^{n_x\times n_p}$, $C\in\mathbb{R}^{n_q\times n_x}$, $D\in\mathbb{R}^{n_q\times n_u}$, and
$G\in\mathbb{R}^{n_q\times n_w}$ are assumed to be time-invariant \footnote{ The matrices $E, C, D$, and $G$ are selector matrices with 0s and 1s to organize the nonlinearity of the system. The simplest case has $E=I$, $q=[x^\top, u^\top ,w^\top]^\top$, and $p = \phi(t,q) =  f(t,x,u,w) - Ax - Bu - Fw$.}. The more details in choosing these matrices could be found in \cite{reynolds2021funnel}.

With the state difference $\eta\coloneqq x-\bar{x}$, the difference dynamics can be derived as
\begin{align*}
\dot{\eta}(t) & =f(t,x,u,w)-f(t,\bar{x},\bar{u},0),\\
 & =A(t)\eta(t)+B(t)\xi(t)+F(t)w(t)+E\delta p(t),\\
\delta p(t) & =\phi(t,q(t))-\phi(t,\bar{q}(t)),\\
\delta q(t) & =C\eta(t)+D\xi(t)+Gw(t),
\end{align*}
where $\xi\coloneqq u-\bar{u}$ and $\delta q\coloneqq q-\bar{q}$ with $\bar{q}=C\bar{x}+D\bar{u}$. Since continuously differentiable functions are locally Lipschitz, $f$  and $\phi$ are locally Lipschitz. It follows that for all $t\in[t_{0},t_{f}]$
\begin{align*}
\norm{p-\bar{p}}_{2} & \leq\gamma(t)\norm{q-\bar{q}}_{2},\quad\forall\, q,\bar{q}\in\mathcal{Q},
\end{align*}
where $\gamma(t)\in\mathbb{R}_{+}$ is a Lipschitz constant for each $t$ and $\mathcal{Q} \subseteq \mathbb{R}^{n_x}$ is any compact set. By employing the linear feedback controller, that is $\xi(t)=K(t)\eta(t)$, the closed-loop system can be written as
\begin{subequations}\label{eq:diff_closedloop}
\begin{align}
\dot{\eta} & =(A+BK)\eta+Fw+E\delta p,\quad \norm{w(\cdot)}_\infty \leq 1, \\
\delta q & =C\eta+D\xi+Gw, \quad \norm{\delta p}_{2} \leq\gamma\norm{\delta q}_{2}.
\end{align}
\end{subequations}
With \eqref{eq:diff_closedloop}, we can express the nonlinear closed-loop system as the LTV system  having the state and input dependent uncertainty $\delta p$. This could be a conservative way to handle the nonlinear system, but it allows us to design a quadratic Lyapunov function with which we can guarantee the invariance and attractivity conditions of the funnel for the original nonlinear system.

\subsection{Lyapunov Conditions}
With a continuously differentiable positive definite matrix-valued function $Q:\mathbb{R}_+ \rightarrow \mathbb{S}_{++}^{n_{x}}$, the Lyapunov function is defined as 
\begin{align}
V(t,\eta) & \coloneqq\eta^{\top}(t)Q^{-1}(t)\eta(t). \label{eq:Lyapunov_function}
\end{align}
Here we aim to impose the following Lyapunov condition for the closed-loop system \eqref{eq:diff_closedloop}: 
\begin{subequations}\label{eq:Lyapunov_condition}
\begin{align}
\dot{V}(t,\eta) & \leq-\alpha V(t,\eta),\\
\text{for all }\norm{\delta p(t)}_{2} & \leq\gamma(t)\norm{\delta q(t)}_{2}, \\
\text{and }V(t,\eta) & \geq\norm{w(t)}_{2}^{2}, \quad\forall\, t  \in[t_{0},t_{f}], 
\end{align}
\end{subequations}
where $\alpha\in\mathbb{R}_{++}$ is a decay rate. With the above Lyapunov condition, we can establish the following lemma.

\begin{lemma} \label{lemma1}
Suppose that the Lyapunov condition \eqref{eq:Lyapunov_condition} holds with a positive definite
matrix-valued continuous function $Q(t)$ , then the time-varying ellipsoid defined as
\begin{align}
\mathcal{E}(t) = \{\eta\mid\eta^{\top}Q(t)^{-1}\eta\leq 1\}, \label{eq:minimial_funnel}
\end{align}
is invariant for the closed-loop system \eqref{eq:diff_closedloop}, that is, if $\eta(\cdot)$ is any solution with $\eta(t_{0})\in\mathcal{E}(t_{0})$, then $\eta(t)\in\mathcal{E}(t)$ for all $t\in[t_{0},t_{f}]$. 
Furthermore, the ellipsoid $\mathcal{E}(t)$ is attractive such that for any solution $\eta(\cdot)$, the following holds for all $t \in [t_0,t_f]$: 
\begin{align}
V(t,\eta(t))\leq\max\{e^{-\alpha(t-t_{0})}V(t_{0},\eta(t_{0})),1\}.\label{eq:attractive_condition}
\end{align}
\end{lemma}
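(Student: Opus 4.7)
The plan is to prove invariance first, then attractivity, by carefully exploiting the fact that the decay condition $\dot V(t,\eta)\le -\alpha V(t,\eta)$ is triggered precisely when $V(t,\eta)\ge \norm{w(t)}_2^2$, together with the uniform bound $\norm{w(t)}_2\le 1$. This bound is the crucial glue: whenever $V\ge 1$ we automatically have $V\ge \norm{w}_2^2$, so the decay inequality applies. The Lipschitz condition $\norm{\delta p}_2\le\gamma\norm{\delta q}_2$ is always met along any actual closed-loop trajectory (as long as the trajectory stays in the compact set $\mathcal{Q}$ on which the Lipschitz constant is taken), so the ``for all admissible $\delta p$'' wording in \eqref{eq:Lyapunov_condition} specializes to the true trajectory.

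For invariance, I would assume $\eta(t_0)\in\mathcal{E}(t_0)$, i.e.\ $V(t_0,\eta(t_0))\le 1$, and argue by contradiction. Suppose there exists $t^\star\in(t_0,t_f]$ with $V(t^\star,\eta(t^\star))>1$. By continuity of $t\mapsto V(t,\eta(t))$ (which follows from continuity of $Q$ and $\eta$), define $t_1 \coloneqq \sup\{t\in[t_0,t^\star]:V(s,\eta(s))\le 1 \text{ for all } s\in[t_0,t]\}$. Then $V(t_1,\eta(t_1))=1$ and, on a right-neighborhood of $t_1$, $V\ge 1\ge \norm{w}_2^2$. Applying \eqref{eq:Lyapunov_condition} at $t_1$ gives $\dot V(t_1,\eta(t_1))\le -\alpha<0$, which contradicts the existence of points arbitrarily close to $t_1$ on the right with $V>1$.

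For attractivity, I split into two cases. If $V(t_0,\eta(t_0))\le 1$, invariance yields $V(t,\eta(t))\le 1\le \max\{e^{-\alpha(t-t_0)}V(t_0,\eta(t_0)),1\}$ directly. If $V(t_0,\eta(t_0))>1$, let $t_1\coloneqq\inf\{t\ge t_0:V(t,\eta(t))\le 1\}$ (set $t_1=t_f$ if the set is empty). On $[t_0,t_1)$ we have $V(t,\eta(t))>1\ge\norm{w(t)}_2^2$, so $\dot V\le -\alpha V$ holds pointwise; the comparison (Gr\"onwall) inequality then gives $V(t,\eta(t))\le e^{-\alpha(t-t_0)}V(t_0,\eta(t_0))$ on $[t_0,t_1)$. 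At $t\ge t_1$ (if $t_1<t_f$) the trajectory has entered $\mathcal E(t_1)$, so the invariance result just proved, applied with initial time $t_1$, keeps $V(t,\eta(t))\le 1$ for $t\in[t_1,t_f]$. Combining both regimes yields the desired bound \eqref{eq:attractive_condition}.

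The main obstacle I anticipate is handling the boundary carefully: the Lyapunov decay inequality is only guaranteed on the set $\{V\ge\norm{w}_2^2\}$, so one has to rule out pathological behavior near the boundary $V=\norm{w}_2^2$ using continuity rather than a naive application of Gr\"onwall on $[t_0,t_f]$. The contradiction argument above sidesteps this by only invoking $\dot V<0$ at the specific level set $V=1$, which is uniformly above the time-varying threshold $\norm{w(t)}_2^2$. A minor technical nuance is assuming (or noting) that the trajectory stays inside the compact set $\mathcal Q$ on which $\gamma(t)$ is the valid Lipschitz constant; since invariance keeps $\eta$ bounded, this can be absorbed by choosing $\mathcal Q$ large enough to contain $\bigcup_t(\bar x(t)\oplus\mathcal E(t))$.
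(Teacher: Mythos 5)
Your proposal is correct. Note that the paper does not actually prove Lemma~\ref{lemma1}; it defers to \cite[Lemma B10]{accikmecse2011robust} and \cite[Lemma 1]{accikmecse2003robust}, so your argument fills in exactly the standard level-set/comparison reasoning that underlies those cited results. The two pillars you identify are the right ones: (i) $\norm{w(\cdot)}_\infty \le 1$ makes the level set $\{V=1\}$ sit weakly above the time-varying threshold $\norm{w(t)}_2^2$, so the decay inequality in \eqref{eq:Lyapunov_condition} is available there, giving $\dot V(t_1,\eta(t_1)) \le -\alpha < 0$ at any first-exit candidate $t_1$ and hence invariance of $\mathcal{E}$; and (ii) on the region $V>1$ the decay holds pointwise along the trajectory, so Gr\"onwall yields the exponential envelope until the trajectory enters $\mathcal{E}$, after which invariance (restarted at the entry time) takes over — exactly the $\max\{e^{-\alpha(t-t_0)}V(t_0,\eta(t_0)),1\}$ bound of \eqref{eq:attractive_condition}. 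The one point worth making explicit rather than relegating to a closing remark is the mild circularity around the compact set $\mathcal{Q}$: the sector bound $\norm{\delta p}_2 \le \gamma \norm{\delta q}_2$ is only guaranteed while $q(t),\bar q(t)\in\mathcal{Q}$, yet you need that bound to establish the invariance that keeps the trajectory there. This is resolved by a continuation argument — choose $\mathcal{Q}$ a priori to contain $\bigcup_t \left(\{\bar q(t)\}\oplus (C\oplus DK)\mathcal{E}_c(t)\oplus G\mathcal{B}\right)$ and observe that the trajectory cannot leave the funnel without first crossing its boundary, where the decay inequality is already in force — but it deserves a sentence in the proof body rather than an aside, since it is the only place where the ``locally'' in locally Lipschitz actually bites.
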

\noindent The above lemma can be deduced from \cite[Lemma B10]{accikmecse2011robust} and \cite[Lemma 1]{accikmecse2003robust}, so here we skip the proof.

Now we define a \textit{invariant state funnel} with a pair of $Q$ in \eqref{eq:Lyapunov_function} and a continuous scalar-valued function $c:\mathbb{R}_{+}\rightarrow(0,1]$ as
\begin{align} \label{eq:state_funnel}
\mathcal{E}_{c}(t)\coloneqq\left\{\,\eta\,\middle|\,\eta^{\top}Q(t)^{-1}\eta\leq\frac{1}{c(t)}\right\},
\end{align}
where the function $c(t)$ satisfy the following condition:
\begin{align}
\frac{1}{c(t)}\geq\max\left\{1,e^{-\alpha(t-t_{0})}\frac{1}{c(t_{0})}\right\},\label{eq:condition_c}
\end{align}
with $0<c(t_{0})\leq1$. With the ellipsoid $\mathcal{E}_{c}(t)$ having $1/c(t)$ as the support value, we show the invariance property of $\mathcal{E}_{c}(t)$ in the following lemma.

\begin{lemma} \label{lemma2}
The ellipsoid $\mathcal{E}_{c}(t)$ defined in \eqref{eq:state_funnel} with \eqref{eq:condition_c} is invariant for the closed-loop system \eqref{eq:diff_closedloop} such that if $\eta(\cdot)$ is any solution with $\eta(t_{0})\in\mathcal{E}_{c}(t_{0})$, then $\eta(t)\in\mathcal{E}_{c}(t)$ for all $t\in[t_{0},t_{f}]$.
\end{lemma}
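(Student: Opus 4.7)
The plan is to reduce the claim directly to the attractivity estimate \eqref{eq:attractive_condition} supplied by Lemma~\ref{lemma1}, invoking the defining inequality \eqref{eq:condition_c} on $c(t)$ to match the two ``$\max$'' expressions. Since the hypotheses of Lemma~\ref{lemma1} are in force (the Lyapunov condition \eqref{eq:Lyapunov_condition} is assumed throughout this subsection), every closed-loop trajectory already satisfies the exponential-decay-to-unit-sublevel-set bound \eqref{eq:attractive_condition}; the only new ingredient is that the initial sublevel of $V$ is now $1/c(t_0)$ rather than $1$.

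Concretely, I would proceed as follows. First, take any solution $\eta(\cdot)$ of \eqref{eq:diff_closedloop} with $\eta(t_{0})\in\mathcal{E}_{c}(t_{0})$, which by definition of $\mathcal{E}_{c}$ in \eqref{eq:state_funnel} means
\[
V(t_{0},\eta(t_{0}))=\eta(t_{0})^{\top}Q(t_{0})^{-1}\eta(t_{0})\;\leq\;\frac{1}{c(t_{0})}.
\]
Second, apply Lemma~\ref{lemma1} directly: for every $t\in[t_{0},t_{f}]$,
\[
V(t,\eta(t))\;\leq\;\max\!\bigl\{\,e^{-\alpha(t-t_{0})}V(t_{0},\eta(t_{0})),\,1\,\bigr\}\;\leq\;\max\!\left\{\,e^{-\alpha(t-t_{0})}\tfrac{1}{c(t_{0})},\,1\,\right\},
\]
where in the second inequality monotonicity of the outer max in its first argument and the bound on $V(t_0,\eta(t_0))$ are used. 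Third, invoke condition \eqref{eq:condition_c}, which is exactly the statement that the right-hand side above is dominated by $1/c(t)$, yielding $V(t,\eta(t))\leq 1/c(t)$, i.e.\ $\eta(t)\in\mathcal{E}_{c}(t)$.

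There is no real technical obstacle: the entire content is packaging. The one small item worth being explicit about is that the initial condition $\eta(t_{0})\in\mathcal{E}_{c}(t_{0})$ requires $0<c(t_{0})\leq 1$ so that $1/c(t_{0})\geq 1$ is a valid sub-level set radius for the quadratic form $V$, and that \eqref{eq:condition_c} is precisely the minimal requirement on $c$ which makes the two ``$\max$'' expressions line up for every $t$. I would also remark that the case $c(t)\equiv 1$ recovers Lemma~\ref{lemma1}'s invariance statement, so Lemma~\ref{lemma2} is a strict generalization giving a one-parameter family of nested invariant funnels that shrink at least at the rate $e^{-\alpha(t-t_0)}$ until they are absorbed by the unit-$V$ sublevel set.
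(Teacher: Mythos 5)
Your proof is correct and follows essentially the same route as the paper: both reduce the claim to the attractivity estimate \eqref{eq:attractive_condition} from Lemma~\ref{lemma1} combined with the defining inequality \eqref{eq:condition_c} on $c(t)$. The only cosmetic difference is that the paper first splits off the case $\eta(t_0)\in\mathcal{E}(t_0)$ (handled by invariance of $\mathcal{E}$ and the inclusion $\mathcal{E}(t)\subseteq\mathcal{E}_c(t)$) before applying the attractivity bound to the remaining initial conditions, whereas your single uniform argument covers both cases since \eqref{eq:attractive_condition} holds for any solution.
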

\begin{proof}
If the solution $\eta(\cdot)$ satisfies $\eta(t_{0})\in\mathcal{E}(t_{0})$, it is trivial to prove the invariance of $\mathcal{E}_{c}(t)$ since $\mathcal{E}(t)$ is invariant and $\mathcal{E}(t) \subseteq  \mathcal{E}_{c}(t)$. 
Consider the solution $\eta(\cdot)$ such that $\eta(t_{0})\in\mathcal{E}_{c}(t_{0})\setminus\mathcal{E}(t_{0})$. By the attractivity condition \eqref{eq:attractive_condition}, we have $V(t,\eta(t))\leq\max\{e^{-\alpha(t-t_{0})}V(t_{0},\eta(t_{0})),1\}$. It follows from $V(t_{0},\eta(t_{0}))\leq1/c(t_{0})$ and $0<c(t_{0})\leq1$ that $V(t,\eta(t))\leq\max\{e^{-\alpha(t-t_{0})}\frac{1}{c(t_{0})},1\}\leq1/c(t)$ for $t\in[t_{0},t_{f}]$. This completes the proof.
\end{proof}

\begin{figure}
\begin{center}
\vspace*{0.05in}
\includegraphics[width=8.5cm]{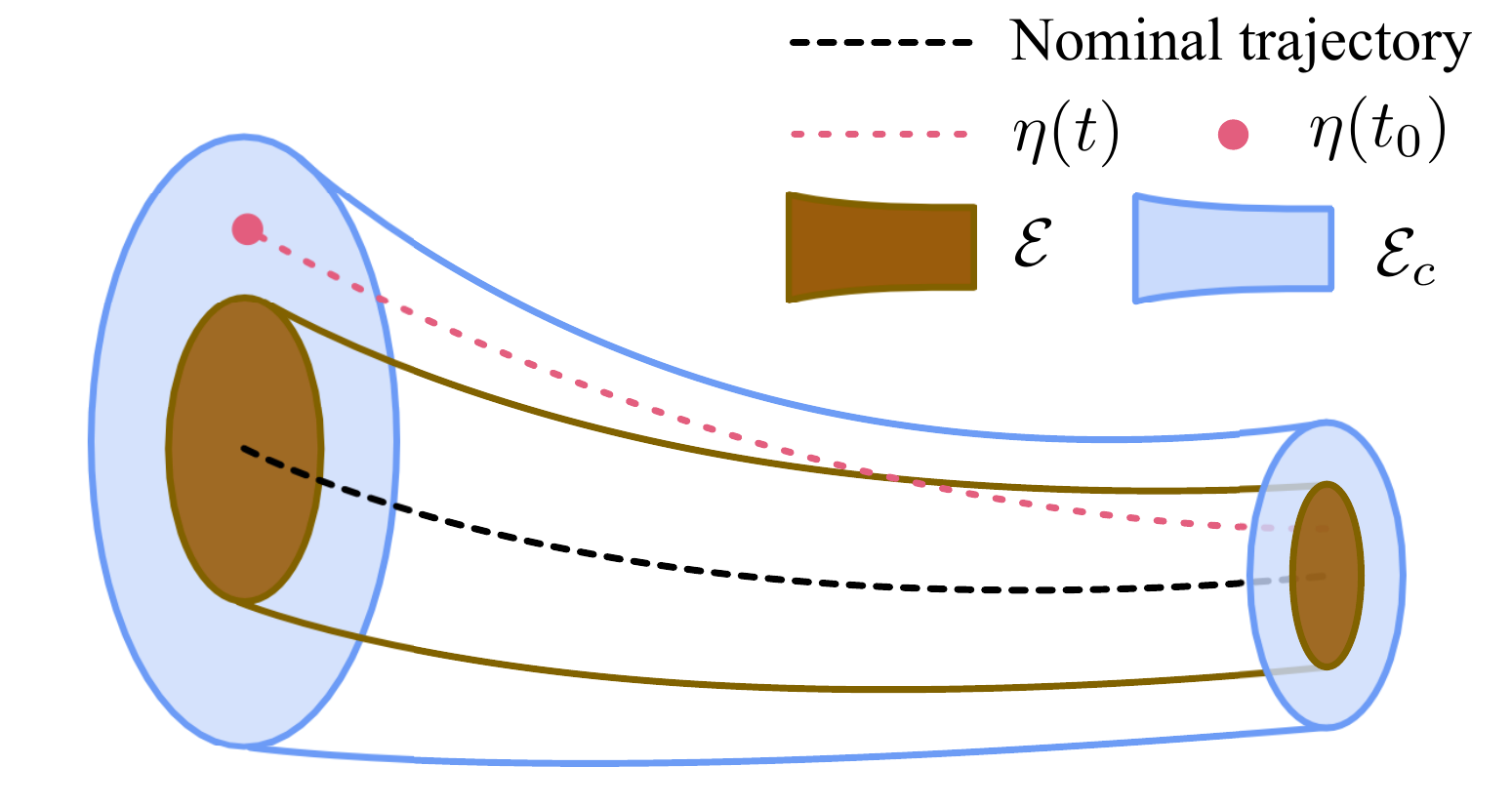}  
\caption{
Illustration of the ellipsoids $\mathcal{E}(t)$ and $\mathcal{E}_{c}(t)$. An example of solution $\eta(t)$ is given as a dashed red line. Since $\mathcal{E}$ is attractive, any solution $\eta(\cdot)$ starting with $\eta(t_{0})\in\mathcal{E}_{c}(t_{0})\backslash\mathcal{\mathcal{E}}(t_{0})$ converges to $\mathcal{E}$ if $t_{f}$ is sufficiently large. The proposed funnel synthesis aims to maximize the size of $\mathcal{E}_c (t_0)$ and minimize that of $\mathcal{E}(t)$ for all $t$ in $[t_0,t_f]$.
} 
\label{fig:funnel}
\end{center}
\end{figure}

The illustration of both the ellipsoids $\mathcal{E}(t)$ and $\mathcal{E}_{c}(t)$ is given in Figure \ref{fig:funnel}. Any solution $\eta(\cdot)$ of the closed-loop system \eqref{eq:diff_closedloop} starting at $\mathcal{E}_{c}(t_{0})$ remains in the state funnel $\mathcal{E}_{c}(t)$ for all $t\in[t_{0},t_{f}]$ because of the invariance condition of $\mathcal{E}_{c}$ derived in Lemma \ref{lemma2}. Furthermore, the solution $\eta(\cdot)$ starting at $\mathcal{E}_{c}(t_{0})$ converges to the ellipsoid $\mathcal{E}$ if $t_f$ is sufficiently large because of the attractivity of $\mathcal{E}$ given in Lemma \ref{lemma1}. Since we use the attractivity condition of $\mathcal{E}$ as a key property for our funnel generation, we refer to $\mathcal{E}$ in \eqref{eq:minimial_funnel} as an \textit{attractive funnel}.



Additionally, with the linear feedback control $\xi=K\eta$, the condition $\eta\in\mathcal{E}_{c}$ implies that $\xi$ is in the following ellipsoid \cite{boyd2004convex,kurzhanski1997ellipsoidal}:
\begin{align}
\mathcal{E}_{u}=\{(KQK^{\top})^{\frac{1}{2}}y\mid\norm y_{2}\leq1/\sqrt{c},y\in\mathbb{R}^{n_{u}}\}.\label{eq:input_funnel}
\end{align}
The set $\mathcal{E}_{u}$ represents the ellipsoid inside which the input deviation $\xi$ remains, so we refer to $\mathcal{E}_{u}$ as an \textit{invariant input funnel}. Now, we are ready to derive the DLMI condition that guarantees the invariant and attractive conditions.

\begin{thm}  \label{theorem}
Suppose that there exists $Q:[t_{0},t_{f}]\rightarrow\mathbb{S}_{++}^{n_{x}}$, $Y:[t_{0},t_{f}]\rightarrow\mathbb{R}^{n_{u}\times n_{x}}$, $\nu:[t_{0},t_{f}]\rightarrow\mathbb{R}_{++}$, $0<\lambda_{w}$, and $0<\alpha$ such that the following differential matrix inequality holds for all $t\in[t_{0},t_{f}]$:
\begin{align}
H\coloneqq \left[\begin{array}{cccc}
M-\dot{Q} & * & * & *\\
\nu E^{\top} & -\nu I & * & *\\
F^{\top} & 0 & -\lambda_{w}I & *\\
CQ+DY & 0 & G & -\nu\frac{1}{\gamma^{2}}I
\end{array}\right] & \preceq0,\label{eq:DLMI_nonlinear}\\
M\coloneqq QA^{\top}+Y^{\top}B^{\top}+AQ+BY+\alpha Q & +\lambda_{w}Q.\nonumber
\end{align}
Then, the Lyapunov condition \eqref{eq:Lyapunov_condition} holds for the closed-loop system \eqref{eq:diff_closedloop} with $K=YQ^{-1}$. Thus, with $Q(t)$ and $K(t)$ satisfying the DLMI \eqref{eq:DLMI_nonlinear}, the ellipsoid $\mathcal{E}(t)$ in \eqref{eq:minimial_funnel} is invariant and attractive, and $\mathcal{E}_{c}(t)$ in \eqref{eq:state_funnel} is invariant by Lemma \ref{lemma1} and Lemma \ref{lemma2}. 
\end{thm}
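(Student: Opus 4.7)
The plan is to show that the DLMI \eqref{eq:DLMI_nonlinear} is equivalent, modulo a congruence transformation and Schur complements, to the S-procedure sufficient condition for the Lyapunov inequality $\dot V + \alpha V \leq 0$ under the two quadratic constraints $\|\delta p\|_2^2 \leq \gamma^2 \|\delta q\|_2^2$ and $V \geq \|w\|_2^2$. Once that equivalence is established, the Lyapunov condition \eqref{eq:Lyapunov_condition} follows, and then Lemmas~\ref{lemma1} and \ref{lemma2} directly yield invariance and attractivity of the funnels.

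First I would differentiate $V = \eta^\top Q^{-1}\eta$ along the closed-loop dynamics \eqref{eq:diff_closedloop}, using the identity $\tfrac{d}{dt}Q^{-1} = -Q^{-1}\dot Q\,Q^{-1}$ together with $K = YQ^{-1}$. This yields $\dot V + \alpha V$ as a quadratic form in the augmented vector $z = [\eta^\top,\delta p^\top,w^\top]^\top$, with a symmetric Gram matrix whose (1,1) block contains $(A+BK)^\top Q^{-1} + Q^{-1}(A+BK) - Q^{-1}\dot Q\,Q^{-1} + \alpha Q^{-1}$, and off-diagonal blocks $Q^{-1}E$, $Q^{-1}F$. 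Then I would apply the S-procedure with multipliers $\nu > 0$ (for the Lipschitz constraint, written as $\|\delta p\|_2^2 - \gamma^2\|\delta q\|_2^2 \leq 0$ with $\delta q = (C+DK)\eta + Gw$) and $\lambda_w > 0$ (for $\|w\|_2^2 - V \leq 0$) to obtain a sufficient matrix inequality in $z$ that holds \emph{without} constraints.

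Next, I would perform a congruence transformation of that matrix inequality by $\mathrm{diag}(Q,I,I)$, which is admissible because $Q \succ 0$. This substitution, combined with $Y = KQ$, cleans the $Q^{-1}$ factors and produces the block $M - \dot Q$ in the (1,1) position, with a residual outer-product contribution $\tilde\nu\gamma^2 (CQ+DY,0,G)^\top(CQ+DY,0,G)$ coming from the S-procedure term involving $\|\delta q\|_2^2$. A single application of the Schur complement then lifts this outer product into an additional block row and column indexed by $\delta q$, giving a $4\times 4$ block LMI of the form claimed in the theorem. A final diagonal congruence by $\mathrm{diag}(I,\nu I,I,I)$ with the reparametrization $\nu = 1/\tilde\nu$ rescales the $E$ and $\nu I$ entries to match the precise formatting of \eqref{eq:DLMI_nonlinear}.

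The routine but tedious part, and the main bookkeeping obstacle, is keeping the multiplier scalings consistent through the congruence and the Schur complement so that the final signs and factors of $\nu$, $1/\gamma^2$, and $\lambda_w$ match \eqref{eq:DLMI_nonlinear} exactly; there is also a sign check to confirm that the S-procedure inequality is $\dot V + \alpha V + \nu(\|\delta p\|_2^2 - \gamma^2\|\delta q\|_2^2) + \lambda_w(\|w\|_2^2 - V) \leq 0$ in the correct direction. Once the LMI equivalence is established, the Lyapunov condition \eqref{eq:Lyapunov_condition} follows on the whole interval, and invocation of Lemmas~\ref{lemma1} and \ref{lemma2} completes the proof.
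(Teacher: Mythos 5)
Your proposal follows the paper's own proof essentially step for step: differentiate $V=\eta^\top Q^{-1}\eta$ along \eqref{eq:diff_closedloop}, apply the S-procedure with one multiplier for the sector bound on $\delta p$ and one for $V\geq\norm{w}_2^2$, then a congruence by $\mathrm{diag}(Q,\cdot,I,I)$ together with a Schur complement that lifts the $\delta q$ outer product into the fourth block row, identifying $\nu$ as the reciprocal of the sector multiplier. One concrete correction: the S-procedure certificate you wrote down has the multipliers on the wrong side. To conclude $\dot V+\alpha V\leq 0$ on the set where $\gamma^2\norm{\delta q}_2^2-\norm{\delta p}_2^2\geq 0$ and $V-\norm{w}_2^2\geq 0$, the unconstrained inequality must be $\dot V+\alpha V+\lambda_p\bigl(\gamma^2\norm{\delta q}_2^2-\norm{\delta p}_2^2\bigr)+\lambda_w\bigl(V-\norm{w}_2^2\bigr)\leq 0$ with $\lambda_p,\lambda_w>0$; the version you state, with $+\nu\bigl(\norm{\delta p}_2^2-\gamma^2\norm{\delta q}_2^2\bigr)+\lambda_w\bigl(\norm{w}_2^2-V\bigr)$, only bounds $\dot V+\alpha V$ above by a nonnegative quantity on the constraint set, and if carried through it would put $+\lambda_p I$ and $+\lambda_w I$ on the $\delta p$ and $w$ diagonal blocks (and $-\lambda_w Q$ in $M$), which can never satisfy $\preceq 0$ and does not reproduce \eqref{eq:DLMI_nonlinear}. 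You flagged the sign as needing a check; with that single flip the bookkeeping closes and your derivation coincides with the paper's.
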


\begin{proof}
By definition of positive definiteness and S-procedure \cite{boyd1994linear}, the sufficient condition for the Lyapunov condition \eqref{eq:Lyapunov_condition} is that if there exists scalars $\lambda_{p}>0$ and $\lambda_{w}>0$ such that
\begin{equation*}\resizebox{0.91\hsize}{!}{%
$\begin{aligned}
\left[\begin{array}{ccc}
 \bar{M}-Q^{-1}\dot{Q}Q^{-1} &  * &  *\\
 E^{\top}Q^{-1} &  0 &  *\\
 F^{\top}Q^{-1} &  0 &  0
\end{array}\right] +\lambda_{p}
 {\underbrace{
 \left[\begin{array}{ccc}
 C_{k}^{cl} &  0 &  G_{k}\\
 0 &  I &  0
\end{array}\right]
}_{\coloneqq C_G}}^\top
\left[\begin{array}{cc}
 \gamma^{2}I &  0\\
 0 &  -I
\end{array}\right]
C_G
\\
+ \lambda_{w}\left[\begin{array}{ccc}
 Q^{-1} &  0 &  0\\
 0 &  0 &  0\\
 0 &  0 &  -I
\end{array}\right]   \preceq0,
\end{aligned}$%
}\end{equation*}
where $\bar{M}\coloneqq A_{cl}^{\top}Q^{-1}+Q^{-1}A_{cl} + \alpha Q^{-1}$. Applying Schur complement, and then multiplying either side by $\text{diag}\{Q,\lambda_{p}^{-1}I,I,I\}$ complete the proof with $\nu\coloneqq\lambda_{p}^{-1}$.
\end{proof}
\noindent Notice that the above differential matrix inequality \eqref{eq:DLMI_nonlinear} is linear in $\dot{Q}$, $Q$, $Y$, and $\nu$ once $\lambda_{w}$ and $\alpha$ are fixed.

\subsection{Feasibility of State and Input Funnels}
The funnel synthesis of the proposed work aims to be not only invariant but also feasible, so constraints on the invariant state and input funnels should be satisfied. The feasible sets for the state and input funnels can be described as
\begin{align*}
\mathcal{X} & =\{x\mid h_{i}(x)\leq0,\quad i=1,\ldots,m_{x}\},\\
\mathcal{U} & =\{u\mid g_{j}(u)\leq0,\quad j=1,\ldots,m_{u}\},
\end{align*}
where $h_{i}:\mathbb{R}^{n_{x}}\rightarrow\mathbb{R}$ and $g_{j}:\mathbb{R}^{n_{u}}\rightarrow\mathbb{R}$ are assumed to be at least once differentiable (possibly nonconvex) functions. 
Since it is not tractable to impose the nonconvex constraints on the ellipsoid funnel, we linearize the above constraints around the nominal trajectory, resulting in the following polyhedral constraints sets \cite{reynolds2021funnel}:
\begin{align*}
\mathcal{P}_{x} & =\{x\mid(a_{i}^{h})^{\top}x\leq b_{i}^{h},\quad i=1,\ldots,m_{x}\},\\
\mathcal{P}_{u} & =\{u\mid(a_{j}^{g})^{\top}u\leq b_{j}^{g},\quad j=1,\ldots,m_{u}\},
\end{align*}
where $(a_{i}^{h},b_{i}^{h})$ and $(a_{j}^{g},b_{j}^{g})$ are first-order approximations of $h_{i}$ and $g_{j}$, respectively. The inclusions $\mathcal{P}_{x}\subseteq\mathcal{X}$ and $\mathcal{P}_{u}\subseteq\mathcal{U}$ hold if the function $h_{i}$ is a concave function, such as ellipsoidal obstacle avoidance constraints. Here we assume that the input constraint set $\mathcal{P}_u$ is bounded in $\mathbb{R}^{n_u}$ to prohibit the control input from being arbitrarily large. This assumption usually holds because the unbounded input is not allowable in practice.

Now we aim to design $\mathcal{E}_{c}$ in \eqref{eq:state_funnel} and $\mathcal{E}_{u}$ in \eqref{eq:input_funnel} with $Q$ and $K$ such that $\{\bar{x}\}\oplus\mathcal{E}_{c} \subseteq\mathcal{P}_{x}$, $\{\bar{u}\}\oplus\mathcal{E}_{u} \subseteq\mathcal{P}_{u}$. These conditions could be equivalently written as \cite{boyd1994linear}
\begin{align*}
\norm{(Q/c)^{\frac{1}{2}}a_{i}^{h}}_{2}  \leq b_{i}^{h}-(a_{i}^{h})^{\top}\bar{x},\quad i&=1,\ldots,m_{x},\\
\norm{(K(Q/c)K^{\top})^{\frac{1}{2}}a_{j}^{g}}_{2}  \leq b_{j}^{g} - (a_{j}^{g})^{\top}\bar{u},\quad j&=1,\ldots,m_{u}.
\end{align*}
Squaring both sides and applying Schur complement equivalently generates
\begin{align}
0 & \preceq\left[\begin{array}{cc}
\left(b_{i}^{h}-(a_{i}^{h})^{\top}\bar{x}\right)^{2}c & (a_{i}^{h})^{\top}Q\\
Qa_{i}^{h} & Q
\end{array}\right] ,\label{eq:const_state}\\
0 & \preceq\left[\begin{array}{cc}
\left(b_{j}^{g}-(a_{j}^{g})^{\top}\bar{u}\right)^{2}c & (a_{j}^{g})^{\top}Y^\top\\
Ya_{j}^{g} & Q
\end{array}\right], \label{eq:const_input} \\
i&=1,\ldots,m_{x}, \quad j=1,\ldots,m_{u}. \nonumber
\end{align}
The feasibility conditions \eqref{eq:const_state} and \eqref{eq:const_input} are linear in $Q$, $Y$, and $c$.

\subsection{Objectives}

The goal of the  funnel synthesis aims to 1) maximize the size of invariant funnel entry $\mathcal{E}_c(t_0)$ from which the system can remain inside the invariant funnel and converge to the attractive funnel, and 2) minimize the size of the attractive funnel $\mathcal{E}(t)$ for all $t$ in $[t_0,t_f]$ for the disturbance attenuation.

First, the volume of funnel entry $\mathcal{E}_c(t_0)$ is proportional to $\det{Q(t_0)/c(t_0)}$ \cite{boyd2004convex}. Since a log function is increasing, it is equivalent to maximizing $\log\det(Q(t_0)/c(t_0))$. It follows that
\begin{align*}
\log\det (Q(t_0)/c(t_0)) = -n_x \log c(t_0) + \log\det Q(t_0).
\end{align*}
Hence, to maximize the volume of the funnel entry, we minimize $c(t_0)$ and $-\log\det Q(t_0)$, both of which are convex functions. Second, minimizing the volume of the set $\mathcal{E}(t)$ is equivalent to minimizing $\log\det Q(t)$ that is a concave function. Since minimizing the concave function is a nonconvex problem,  we instead minimize the maximum radius of $\mathcal{E}(t)$ that is equal to the squared root of the maximum eigenvalue of $Q(t)$ \cite{boyd2004convex}. Therefore, instead of minimizing the volume, we minimize the maximum eigenvalue of $Q(t)$ that is a convex function.

In summary, the funnel synthesis aims to minimize a cost function $J$ given as
\begin{align}
J=w_{c}c(t_{0})-w_{Q_0}\log\det Q(t_0)+\int_{t_{0}}^{t_{f}}\bar{w}_{Q}v^{Q}(t)\text{d}t, \label{eq:cost_function} \\
\text{ with }Q(t)\preceq v^{Q}(t)I,\quad\forall\, t\in[t_{0},t_{f}], \label{eq:eq_with_slack_variable}
\end{align}
where $v^{Q}(t)\in\mathbb{R}_{++}$ is a slack variable introduced to minimize the maximum eigenvalue of $Q(t)$, and $w_{c}, w_{Q_0}, \bar{w}_{Q}\in\mathbb{R}_{++}$ are user-defined weight parameters.

\subsection{Continuous-time funnel synthesis problem}
The continuous-time funnel synthesis problem can be formulated as follows:
\begin{subequations} \label{eq:contin_funnel_problem}
\begin{align}
    \underset{\resizebox{0.3\hsize}{!}{$Q(t), Y(t), c(t), \nu(t), v^Q(t)$}}{\operatorname{minimize}}~~& \eqref{eq:cost_function} \\[-0.1cm]
    \operatorname{subject~to}~~~~~&  \forall\, t\in[t_0,t_f], \\
    & \eqref{eq:condition_c},\eqref{eq:DLMI_nonlinear},\eqref{eq:const_state}, \eqref{eq:const_input}, \eqref{eq:eq_with_slack_variable}, \\
    & Q(t_0) \succeq c(t_0) Q_i , \label{eq:boundary_condition1} \\
    & Q(t_f) \preceq c(t_f) Q_f , \label{eq:boundary_condition2}
\end{align}
\end{subequations}
where the matrices $Q_i \in \mathbb{S}_{++}^{n_x}$ and $Q_f \in \mathbb{S}_{++}^{n_x}$ are constant parameters used for the boundary conditions. These boundary conditions imply $\mathcal{E}_c(t_0) \supseteq \{\eta \mid \eta^\top Q_i^{-1} \eta \leq 1\}$ and $\mathcal{E}_c(t_f) \subseteq \{\eta \mid \eta^\top Q_f^{-1} \eta \leq 1\}$.

\section{Optimizing Funnel via Optimal Control} \label{sec:OCP}

The problem formulated in \eqref{eq:contin_funnel_problem} is an infinite-dimensional continuous-time optimization problem, so it is not readily straightforward to solve it numerically. Here we discuss a way to transform the problem into a finite-dimensional discrete-time convex problem.

\subsection{Changing a DLMI to a Differential Matrix Equality}

In this subsection, we illustrate how the funnel synthesis problem \eqref{eq:contin_funnel_problem} can be interpreted as a continuous-time optimal control problem. Observe that the DLMI in \eqref{eq:DLMI_nonlinear} can be equivalently converted into a differential matrix equality (DME) by introducing a PSD-valued slack variable $Z(t)\in\mathbb{S}_{+}^{n_{z}}$ with $n_{z}=n_{x}+n_{p}+n_{w}+n_{q}$ as follows:

\begin{align}
H +\underbrace{\left[\begin{array}{cccc}
Z^{11} & * & * & *\\
Z^{21} & Z^{22} & * & *\\
Z^{31} & Z^{32} & Z^{33} & *\\
Z^{41} & Z^{42} & Z^{43} & Z^{44}
\end{array}\right]}_{\coloneqq Z} & =0, \quad Z \succeq 0, \label{eq:DME}
\end{align}
where $H$ is defined in \eqref{eq:DLMI_nonlinear} and $Z^{ij}(t)$ have appropriate sizes for all $i,j\in\{1,\ldots,4\}$. The first-row and first-column block has the following form:
\begin{align}
\dot{Q}(t)=M(t)+Z^{11}(t). \label{eq:semi-DME}
\end{align}
with $M(t)$ defined in \eqref{eq:DLMI_nonlinear}. The DME \eqref{eq:semi-DME} can be interpreted as a differential equation for a linear dynamical system where $Q$ is a state, and $Y$ and $Z^{11}$ are control inputs.

To derive further, we define the following vectors using the vectorization operation:
\begin{align}
q\coloneqq\vecc Q,y\coloneqq\vecc Y,z^{11}\coloneqq\vecc{Z^{11}}, \label{eq:def_vectors}
\end{align}
where the operation $\vecc{\cdot}$ stacks the columns to make a single vector. Then the DME \eqref{eq:semi-DME} can be equivalently expressed with the vector variables in \eqref{eq:def_vectors} as
\begin{align}
\dot{q}(t) & =A_{q}(t)q(t)+B_{q}(t)y(t)+S_{q}(t)z^{11}(t), \label{eq:ode_q}
\end{align}
with
\begin{align*}
A_{q} & =(I\otimes A)+(A\otimes I)+(\alpha+\lambda_{w})(I\otimes I),\\
B_{q} & =(I\otimes B)+(B\otimes I)K^{c}, \quad S_{q} =(I\otimes I),
\end{align*}
where $K^{c}\in\mathbb{R}^{n_{x}n_{u}\times n_{x}n_{u}}$ is a commutation matrix \cite{magnus1979commutation} such that $K^{c}\vecc N=\vecc{N^{\top}}$ for any arbitrary matrix $N\in\mathbb{R}^{n_{u}\times n_{x}}$. 

\subsection{Multiple Shooting Numerical Optimal Control}
To transform \eqref{eq:contin_funnel_problem} into the finite-dimensional discrete-time optimal control problem, we first choose uniform time grids as $t_{k}=t_0 + \frac{k}{N}(t_{f}-t_{0})$ for all $k\in\mathcal{N}_0^{N}$.
The decision variables and the Lipschitz constant $\gamma$ at each node point are set as $\triangle_k = \triangle (t_k)$ where a placeholder $\triangle$ represents $Q,Y,Z,c,\nu$, and $\gamma$.

We apply continuous piecewise linear interpolation for $Y$, $Z$, $\nu$ and $c^{-1}$ for each $k\in\mathcal{N}_0^{N-1}$ as follows:
\begin{align}
\nonumber \square(t) & =\lambda_{k}^{m}(t)\square_{k}+\lambda_{k}^{p}(t)\square_{k+1},\quad\forall\, t\in[t_{k},t_{k+1}],\\
\lambda_{k}^{m}(t) & =\frac{t_{k+1}-t}{t_{k+1}-t_{k}},\quad\lambda_{k}^{p}(t)=\frac{t-t_{k}}{t_{k+1}-t_{k}}, \label{eq:FOH}
\end{align}
where a placeholder $\square$ stands for $Y,\nu,Z,c^{-1}$. Notice that we apply the piecewise linear interpolation to the inverse of $c$, that is $c^{-1}$,
not $c$ itself. With this interpolation and additional conditions, we can show that $c(t)$ satisfies the condition \eqref{eq:condition_c} for all $t\in[t_{0},t_{f}]$.
\begin{prop}
Suppose that for each subinterval $c(t)$ satisfies
\begin{align*}
c(t) & =\frac{c_{k}c_{k+1}}{\lambda_{k}^{m}(t)c_{k+1}+\lambda_{k}^{p}(t)c_{k}},
\forall\, t\in[t_{k},t_{k+1}],\forall\, k\in\mathcal{N}_{0}^{N-1},
\end{align*}
and
\begin{align}
0 & <c_{k}\leq1, \quad e^{-\alpha(t_{k}-t_{0})}c_{k}\leq c_{0},\quad\forall\, k\in\mathcal{N}_{0}^{N}. \label{eq:condition_c_discrete}
\end{align}
Then, $c(t)$ satisfies the condition \eqref{eq:condition_c}.
\end{prop}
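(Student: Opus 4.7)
The plan is to rewrite the given formula as piecewise linear interpolation of $1/c$, which transforms the claim into a simple statement about convex combinations, and then handle the two cases in the $\max$ on the right-hand side of \eqref{eq:condition_c} separately.

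First I would observe that the stated formula is algebraically identical to
\begin{align*}
\frac{1}{c(t)} = \lambda_k^m(t)\,\frac{1}{c_k} + \lambda_k^p(t)\,\frac{1}{c_{k+1}}, \quad \forall\, t \in [t_k, t_{k+1}],
\end{align*}
i.e., $1/c(t)$ is the piecewise linear interpolant through the values $1/c_k$. In particular, $c(t_0) = c_0$ and $c(t_k) = c_k$ at every node, so the hypothesis \eqref{eq:condition_c_discrete} can be read directly as $1/c_k \geq 1$ and $1/c_k \geq e^{-\alpha(t_k - t_0)}/c_0$ for every $k \in \mathcal{N}_0^N$.

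Next, since $\lambda_k^m(t), \lambda_k^p(t) \geq 0$ and $\lambda_k^m(t) + \lambda_k^p(t) = 1$ on each subinterval, the value $1/c(t)$ is a convex combination of $1/c_k$ and $1/c_{k+1}$. The bound $1/c(t) \geq 1$ then follows immediately from $1/c_k \geq 1$ and $1/c_{k+1} \geq 1$. This disposes of the first term in the $\max$.

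For the exponential term, I would plug the node-wise bound $1/c_j \geq e^{-\alpha(t_j - t_0)}/c_0$ into the convex combination to obtain
\begin{align*}
\frac{1}{c(t)} \geq \frac{1}{c_0}\Bigl[\lambda_k^m(t)\,e^{-\alpha(t_k - t_0)} + \lambda_k^p(t)\,e^{-\alpha(t_{k+1} - t_0)}\Bigr],
\end{align*}
and then invoke convexity of $s \mapsto e^{-\alpha(s - t_0)}$. Since $t = \lambda_k^m(t)\,t_k + \lambda_k^p(t)\,t_{k+1}$ is itself the convex combination corresponding to the interpolation weights, Jensen's inequality yields $e^{-\alpha(t - t_0)} \leq \lambda_k^m(t)\,e^{-\alpha(t_k - t_0)} + \lambda_k^p(t)\,e^{-\alpha(t_{k+1} - t_0)}$, which chains with the previous inequality to give $1/c(t) \geq e^{-\alpha(t - t_0)}/c_0$ on $[t_k, t_{k+1}]$. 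Combining the two bounds and taking the maximum finishes the proof.

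There is no real obstacle here; the only ingredient that must not be overlooked is the convexity argument in the exponential step, because a naive attempt to interpolate $c(t)$ linearly (rather than $1/c(t)$) would not produce the correct monotonic relationship between the interpolant and the exponential envelope. Recognizing that the authors' non-obvious formula for $c(t)$ is exactly the one that makes $1/c$ piecewise linear is what makes both bounds fall out cleanly.
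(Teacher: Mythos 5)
Your proof is correct and follows essentially the same route as the paper: both rest on recognizing that the given formula makes $1/c(t)$ the piecewise linear interpolant of the nodal values $1/c_k$, and then applying convexity (Jensen) to conclude the interpolant dominates the convex right-hand side of \eqref{eq:condition_c}. The only cosmetic difference is that you treat the two branches of the $\max$ separately, whereas the paper applies convexity to the $\max$ function as a whole.
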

\begin{proof}
We want to show that $\frac{1}{c(t)}\geq\max\{1,e^{-\alpha(t-t_{0})}\frac{1}{c(t_{0})}\}$ for all $t\in[t_{0},t_{f}]$. The condition \eqref{eq:condition_c_discrete} implies $1/c_{k}\geq\max\{1,e^{-\alpha(t_{k}-t_{0})}\frac{1}{c(t_{0})}\}$ for all $k\in\mathcal{N}_0^N$. Notice that  $\max\{1,e^{-\alpha(t-t_{0})}\frac{1}{c(t_{0})}\}$ is convex in $t$,
and $1/c(t)$ is the convex combination of two points
$1/c_{k}$ and $1/c_{k+1}$ for $t\in[t_{k},t_{k+1}]$. Thus, it follows
from the definition of the convex function that $\frac{1}{c(t)}\geq\max\{1,e^{-\alpha(t-t_{0})}\frac{1}{c(t_{0})}\}$ for $t\in[t_{k},t_{k+1}]$.
Since this holds for all $k\in\mathcal{N}_0^{N-1}$, we complete the proof.
\end{proof}

The PD-valued function $Q(t)$ is
not assumed to be piecewise linear, so $q(t)$ is not. Instead, $q(t)$ is the solution of the ordinary differential equation in \eqref{eq:ode_q}. 
Since the system \eqref{eq:ode_q} is linear, we can equivalently express it through discretization with the interpolation \eqref{eq:FOH} as
\begin{align}
q_{k+1}&=A_{k}^{q}q_{k}+B_{k}^{-}y_{k}+B_{k}^{+}y_{k+1}+S_{k}^{-}z_{k}^{11}+S_{k}^{+}z_{k+1}^{11}, \nonumber \\
&\forall\, k\in \mathcal{N}_0^{N-1}, \label{eq:discrete_dynamics}
\end{align}
where $q_k = \vecc{Q_k}$, $y_k = \vecc{Y_k}$, and $z^{11}_k = \vecc{Z^{11}_k}$. The matrices $A_k^q$, $B_k^-$, $B_k^+$, $S_k^-$ and $S_k^+$ are corresponding discretized matrices. More details in obtaining these matrices could be found in \cite{malyuta2019discretization}. The other blocks in \eqref{eq:DME} are imposed as
\begin{equation}\resizebox{0.89\hsize}{!}{%
$\begin{aligned}
\nu_{k}E+Z_{k}^{21}=0,-v_{k}I+Z_{k}^{22}=0,F_k^{\top}+Z_{k}^{31}=0,Z_{k}^{32}=0, -\lambda_{w}I+Z_{k}^{33}=0,\\
 CQ_{k}+DY_{k}+Z_{k}^{41}=0, Z_{k}^{42}=0, G+Z_{k}^{43}=0,-\nu_{k}/\gamma_{k}^{2}I+Z_{k}^{44}=0,
 \end{aligned}$%
 }\label{eq:other_blocks}\end{equation}
for all $k$ in $\mathcal{N}_0^{N}$ where $F_k=F(t_k)$.

\subsection{Discrete-time convex funnel synthesis problem}
The discrete-time funnel synthesis problem can be formulated as follows:
\begin{subequations} \label{eq:discrete_funnel_problem}
\begin{align}
    \underset{\resizebox{0.33\hsize}{!}{$Q_k, Y_k, c_k, \nu_k, v^Q_k, \forall\, k \in \mathcal{N}_0^{N}$}}{\operatorname{minimize}}~&  \begin{array}{l}
    w_c c_0 -w_{Q_0}\log\det Q_0\\[0.1cm] 
~~~~~~+ \sum_{k=0}^N w_Q v^Q_k
    \end{array}\\
    \operatorname{subject~to}~~~~&  Z \succeq 0,\eqref{eq:condition_c_discrete},\eqref{eq:discrete_dynamics},\eqref{eq:other_blocks}, \\
    & \eqref{eq:const_state},\eqref{eq:const_input}, \eqref{eq:eq_with_slack_variable}
    ,\label{eq:discrete_constraints} \\
    & Q_0 \succeq c_0 Q_i, \quad  Q_N \preceq c_N Q_f, 
\end{align}
\end{subequations}
where the constraints  \eqref{eq:const_state}, \eqref{eq:const_input}, \eqref{eq:eq_with_slack_variable}
in \eqref{eq:discrete_constraints} are imposed at each $t=t_k$ for all $k$ in  $\mathcal{N}_0^{N}$. The continuous cost function \eqref{eq:cost_function} is discretized by lower sum between the subintervals, resulting in $w_Q = \bar{w}_Q (t_f-t_0)/N_{node}$. The optimization problem \eqref{eq:discrete_funnel_problem} is convex with LMI constraints, resulting in a SDP problem, so that we can solve it using any SDP solver.

\subsection{Inter-sample constraint violation}

One of the key issues in direct shooting approaches for optimal control is the inter-sample constraint violation 
\cite{richards2015inter} 
that the constraint violation can occur during subintervals since the constraints are enforced only at temporal nodes, not for all time. This is also an issue for the proposed method as well as other funnel generation approaches \cite{majumdar2017funnel,reynolds2021funnel,tobenkin2011invariant}.
Future research will explore how to impose relevant constraints for all time by exploiting the form of the solutions of DME \eqref{eq:DME} \cite{dieci1994positive}.


\section{Numerical Simulation}\label{sec:sim}

\begin{figure}
\begin{center}
\vspace*{0.05in}
\includegraphics[width=8.5cm]{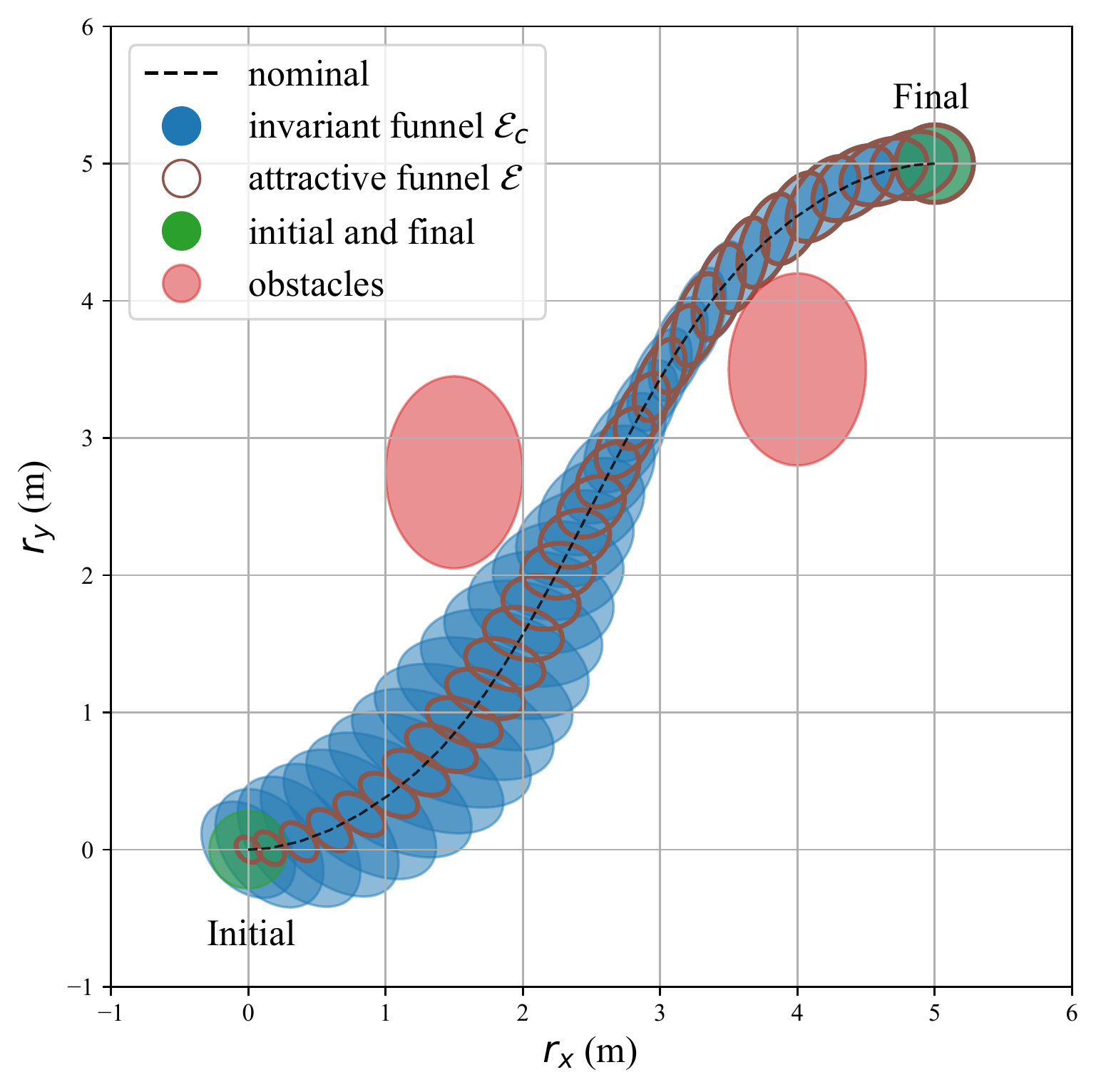} 
\caption{
The figure of the nominal trajectory and synthesized funnels projected on position coordinates. It shows the projection of $\mathcal{E}$ (brown ellipsoid) and $\mathcal{E}_c$ (blue ellipsoid) at each node point.} 
\label{fig:funnel_result}
\end{center}
\end{figure}

\begin{figure}
\begin{center}
\vspace*{0.05in}
\includegraphics[width=8.5cm]{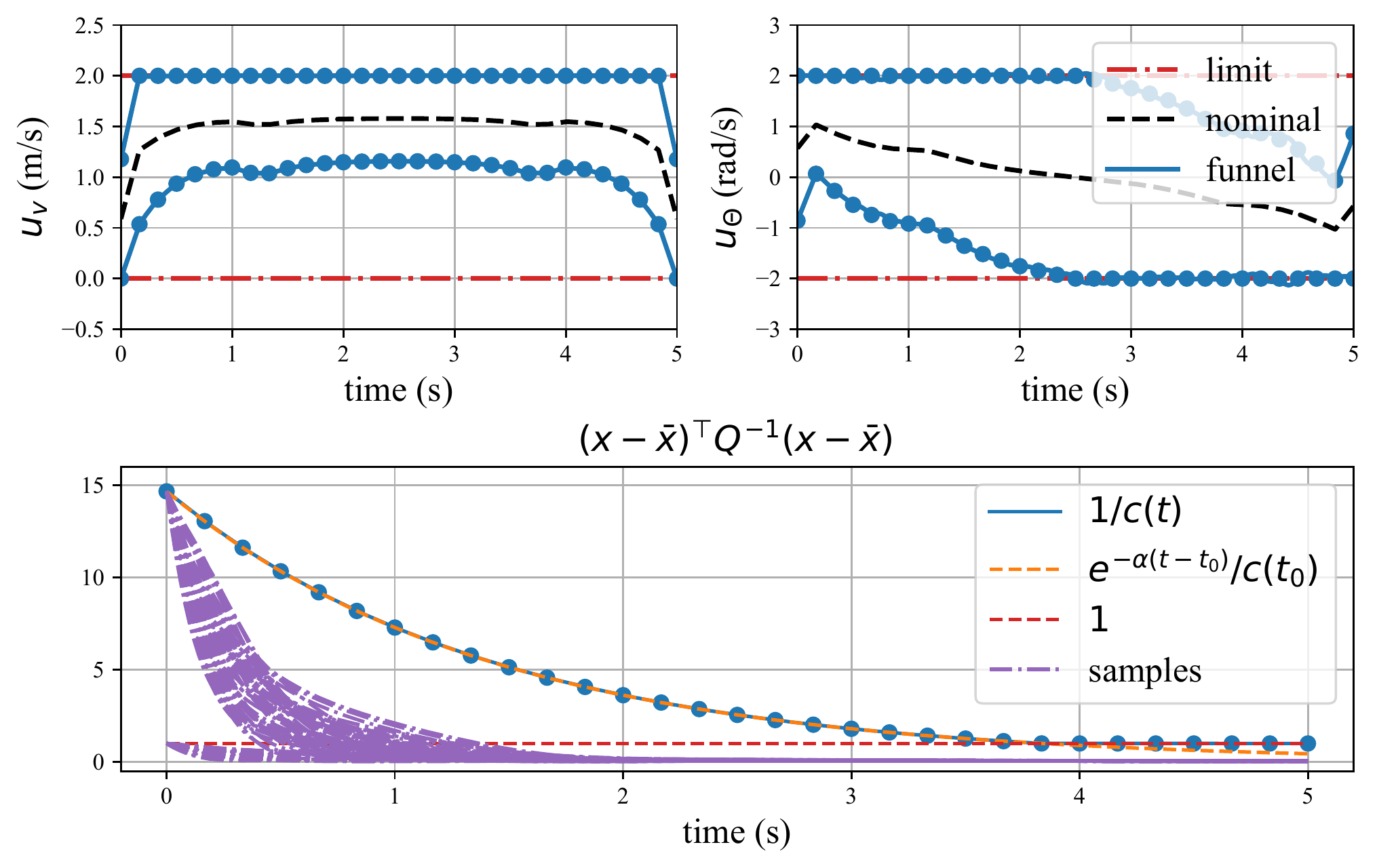} 
\caption{The figure of input funnel (top left and right) and support value $1/c(t)$ (bottom).} 
\label{fig:input_result}
\end{center}
\end{figure}

For the numerical simulation, we consider a unicycle model with addictive disturbances written as
\begin{align}
\left[\begin{array}{c}
\dot{r_x}\\
\dot{r_y}\\
\dot{\theta}
\end{array}\right] & =\left[\begin{array}{c}
u_{v}\cos\theta\\
u_{v}\sin\theta\\
u_{\theta}
\end{array}\right]+\left[\begin{array}{c}
0.1w_{1}\\
0.1w_{2}\\
0
\end{array}\right],\label{eq:unicycle_model}
\end{align}
where $r_x$, $r_y$, and $\theta$ are a $x$-axis position, a $y$-axis position, are a yaw angle, respectively, and $u_{v}$ is a velocity and $u_{\theta}$ is an angular velocity. The values $w_1$ and $w_2$ are disturbances such that $w=[w_1,w_2]^\top, \norm{w}\leq1$. In this model, the argument $q \in \mathbb{R}^2$ for the nonlinearity in \eqref{eq:Lure} is $[\theta, u_v]^\top$. We consider $N=30$ nodes evenly distributed over a time horizon of $5$ seconds with $t_{0}=0$ and $t_{f}=5$. The boundary parameters  $Q_i$ and $Q_f$ are both $\text{diag}([0.08~0.08~0.06])$. We consider two obstacle avoidance constraints, leading to nonconvex constraints for the state illustrated in Fig.~\ref{fig:funnel_result}. The input constraints are given as: $0\leq u_v \leq 2$ and $\abs{u_\theta} \leq 2$. The 100 samples around the nominal trajectory are used for the local Lipschitz constant $\gamma_k$ estimation for all $k$ in $\mathcal{N}_0^N$ by following the procedure given in \cite{reynolds2021funnel}. The weights $w_c$, $w_{Q_0}$, and $w_Q$ are $10^{3}$, $0.1$, and $0.1$, respectively. The parameters $\alpha$ and $\lambda_w$ are $0.7$ and $0.5$, respectively. The simulation can be reproducible by using the code at  \href{https://github.com/taewankim1/funnel_synthesis_multiple_shooting}{\footnotesize{\path{https://github.com/taewankim1/funnel_synthesis_multiple_shooting}}}.

The results of the proposed work are given in Fig.~\ref{fig:funnel_result} and Fig.~\ref{fig:input_result}. We can see that the generated funnel satisfies both state constraints (obstacle avoidance) and input constraints at each node point. Also, the resulting support value $1/c$ satisfies the constraint \eqref{eq:condition_c}. To test the invariance and attractivity conditions, we take a total of 100 samples, 50 from the surface of $\mathcal{E}(t_0)$ and 50 from that of $\mathcal{E}_c(t_0)$. We propagate each sample through the model \eqref{eq:unicycle_model} with a randomly chosen disturbance $w$ such that $\norm{w}=1$. In the bottom figure of Fig.~\ref{fig:funnel_result}, the value of the Lyapunov function for each sample trajectory is plotted. We can see that the invariance conditions of both $\mathcal{E}$ and $\mathcal{E}_c$ hold well, and the samples starting from the surface of $\mathcal{E}_c$ converge to the attractive funnel $\mathcal{E}$ due to the attractivity condition.

\section{Conclusions}\label{sec:conclusions}

This paper presents a funnel synthesis method for locally Lipschitz nonlinear systems under the presence of bounded disturbances. The proposed funnel synthesis approach aims to maximize the funnel entry while minimizing the attractive funnel to bound the effect of the disturbances. To solve the continuous-time funnel optimization problem having the DLMI, we apply the direct multiple shooting optimal control method. In the numerical evaluation with the unicycle model, the results show that the generated funnel satisfies both invariance and feasibility properties.


\bibliographystyle{IEEEtran}
\bibliography{root}

\begin{thebibliography}{10}
\providecommand{\url}[1]{#1}
\csname url@samestyle\endcsname
\providecommand{\newblock}{\relax}
\providecommand{\bibinfo}[2]{#2}
\providecommand{\BIBentrySTDinterwordspacing}{\spaceskip=0pt\relax}
\providecommand{\BIBentryALTinterwordstretchfactor}{4}
\providecommand{\BIBentryALTinterwordspacing}{\spaceskip=\fontdimen2\font plus
\BIBentryALTinterwordstretchfactor\fontdimen3\font minus
  \fontdimen4\font\relax}
\providecommand{\BIBforeignlanguage}[2]{{%
\expandafter\ifx\csname l@#1\endcsname\relax
\typeout{** WARNING: IEEEtran.bst: No hyphenation pattern has been}%
\typeout{** loaded for the language `#1'. Using the pattern for}%
\typeout{** the default language instead.}%
\else
\language=\csname l@#1\endcsname
\fi
#2}}
\providecommand{\BIBdecl}{\relax}
\BIBdecl

\bibitem{rakovic2006simple}
S.~V. Rakovic, A.~R. Teel, D.~Q. Mayne, and A.~Astolfi, ``Simple robust control
  invariant tubes for some classes of nonlinear discrete time systems,'' in
  \emph{Proceedings of the 45th IEEE Conference on Decision and Control}.\hskip
  1em plus 0.5em minus 0.4em\relax IEEE, 2006, pp. 6397--6402.

\bibitem{majumdar2017funnel}
A.~Majumdar and R.~Tedrake, ``Funnel libraries for real-time robust feedback
  motion planning,'' \emph{The International Journal of Robotics Research},
  vol.~36, no.~8, pp. 947--982, 2017.

\bibitem{reynolds2021funnel}
T.~Reynolds, D.~Malyuta, M.~Mesbahi, B.~A{\c{c}}{\i}kme{\c{s}}e, and J.~M.
  Carson, ``Funnel synthesis for the 6-dof powered descent guidance problem,''
  in \emph{AIAA SciTech 2021 Forum}, 2021, p. 0504.

\bibitem{tobenkin2011invariant}
M.~M. Tobenkin, I.~R. Manchester, and R.~Tedrake, ``Invariant funnels around
  trajectories using sum-of-squares programming,'' \emph{IFAC Proceedings
  Volumes}, vol.~44, no.~1, pp. 9218--9223, 2011.

\bibitem{fejlek2022computing}
J.~Fejlek and S.~Ratschan, ``Computing funnels using numerical optimization
  based falsifiers,'' in \emph{2022 International Conference on Robotics and
  Automation (ICRA)}.\hskip 1em plus 0.5em minus 0.4em\relax IEEE, 2022, pp.
  4318--4324.

\bibitem{kim2022joint}
T.~Kim, P.~Elango, and B.~Acikmese, ``Joint synthesis of trajectory and
  controlled invariant funnel for discrete-time systems with locally lipschitz
  nonlinearities,'' \emph{arXiv preprint arXiv:2209.03535}, 2022.

\bibitem{corless1993control}
M.~Corless, ``{Control of Uncertain Nonlinear Systems},'' \emph{Journal of
  Dynamic Systems, Measurement, and Control}, vol. 115, no.~2B, pp. 362--372,
  06 1993.

\bibitem{kothare1996robust}
M.~V. Kothare, V.~Balakrishnan, and M.~Morari, ``Robust constrained model
  predictive control using linear matrix inequalities,'' \emph{Automatica},
  vol.~32, no.~10, pp. 1361--1379, 1996.

\bibitem{accikmecse2011robust}
B.~A{\c{c}}{\i}kme{\c{s}}e, J.~M. Carson~III, and D.~S. Bayard, ``A robust
  model predictive control algorithm for incrementally conic
  uncertain/nonlinear systems,'' \emph{International Journal of Robust and
  Nonlinear Control}, vol.~21, no.~5, pp. 563--590, 2011.

\bibitem{malikov2020numerical}
A.~I. Malikov and D.~I. Dubakina, ``Numerical methods for solving optimization
  problems with differential linear matrix inequalities,'' \emph{Russian
  Mathematics}, vol.~64, pp. 64--74, 2020.

\bibitem{diehl2011numerical}
M.~Diehl and S.~Gros, ``Numerical optimal control,'' \emph{Optimization in
  Engineering Center (OPTEC)}, 2011.

\bibitem{boyd1994linear}
S.~Boyd, L.~El~Ghaoui, E.~Feron, and V.~Balakrishnan, \emph{Linear matrix
  inequalities in system and control theory}.\hskip 1em plus 0.5em minus
  0.4em\relax SIAM, 1994.

\bibitem{accikmecse2003robust}
A.~B. A{\c{c}}{\i}kme{\c{s}}e and M.~Corless, ``Robust tracking and disturbance
  rejection of bounded rate signals for uncertain/non-linear systems,''
  \emph{International Journal of Control}, vol.~76, no.~11, pp. 1129--1141,
  2003.

\bibitem{boyd2004convex}
S.~Boyd and L.~Vandenberghe, \emph{Convex optimization}.\hskip 1em plus 0.5em
  minus 0.4em\relax Cambridge {U}niversity {P}ress, 2004.

\bibitem{kurzhanski1997ellipsoidal}
A.~Kurzhanski and I.~V{\'a}lyi, \emph{Ellipsoidal calculus for estimation and
  control}.\hskip 1em plus 0.5em minus 0.4em\relax Springer, 1997.

\bibitem{magnus1979commutation}
J.~R. Magnus and H.~Neudecker, ``The commutation matrix: some properties and
  applications,'' \emph{The annals of statistics}, vol.~7, no.~2, pp. 381--394,
  1979.

\bibitem{malyuta2019discretization}
D.~Malyuta, T.~Reynolds, M.~Szmuk, M.~Mesbahi, B.~A{\c{c}}{\i}kme{\c{s}}e, and
  J.~M. Carson, ``Discretization performance and accuracy analysis for the
  rocket powered descent guidance problem,'' in \emph{AIAA SciTech 2019 Forum},
  2019, p. 0925.

\bibitem{richards2015inter}
A.~Richards and O.~Turnbull, ``Inter-sample avoidance in trajectory optimizers
  using mixed-integer linear programming,'' \emph{International Journal of
  Robust and Nonlinear Control}, vol.~25, no.~4, pp. 521--526, 2015.

\bibitem{dieci1994positive}
L.~Dieci and T.~Eirola, ``Positive definiteness in the numerical solution of
  {R}iccati differential equations,'' \emph{Numerische Mathematik}, vol.~67,
  no.~3, pp. 303--313, 1994.

\end{thebibliography}

\end{document}